\newtheorem*{mthm}{Main Theorem}
\newtheorem{theorem}{Theorem}[section]
\newtheorem{lemma}[theorem]{Lemma}
\newtheorem{cor}[theorem]{Corollary}
\theoremstyle{definition}
\theoremstyle{remark}
\numberwithin{equation}{section}
    \newcommand{\ran}{\mbox{\rm ran\ }}
    \renewcommand{\phi}{\varphi}
 	\newfont{\caps}{cmcsc9}  
 	\newfont{\jour}{cmti9}  
\theoremstyle{remark}
\newtheorem{exmp}{\bf Example}
\definecolor{green}{rgb}{0,0.5,0}
\definecolor{dkgreen}{rgb}{0,0.6,0}
\definecolor{gray}{rgb}{0.5,0.5,0.5}
\definecolor{mauve}{rgb}{0.58,0,0.82}
\definecolor{purple}{rgb}{0.58,0,0.62}
\renewcommand{\phi}{\varphi}
\def\and{{\quad\text{and}\quad}}
\begin{document}

\title{Powers of posinormal Hilbert-space operators  }
\author{Paul S. Bourdon}
\address{Department of Mathematics\\  University of Virginia\\ Charlottesville, VA 22903}
\email{psb7p@virginia.edu}
\author{C.S.\ Kubrusly}
\address{Department of Mathematics\\Catholic University of Rio de Janeiro\\  22453-900, RJ, Brazil}
\email{carlos@ele.puc-rio.br}
\author{Derek Thompson}
\address{Department of Mathematics, Taylor University,  Upland, IN 46989}
\email{theycallmedt@gmail.com}

\begin{abstract}
A bounded linear operator $A$ on a Hilbert space $\mathcal{H}$ is \textit{posinormal} if there exists a positive operator $P$ such that $AA^{*} = A^{*}PA$.  We show that if $A$ is posinormal with closed range, then $A^n$ is posinormal and has closed range for all integers $n\ge 1$.    Because the collection of posinormal operators includes all hyponormal operators, we obtain as a corollary that powers of closed-range hyponormal operators continue to have closed range.   We also present a simple example of a closed-range operator $T: \mathcal{H}\to \mathcal{H}$ such that $T^2$ does not have closed range.  
\end{abstract}
\subjclass[2010]{Primary 47B02, 47B20}
\keywords{posinormal operator, hyponormal operator, closed range}

\maketitle




\section{Introduction} 

A bounded linear operator $A$ on a Hilbert space $\mathcal{H}$ is said to be \textit{posinormal} if there exists a positive operator $P$ such that $AA^{*} = A^{*}PA$. The operator $P$ is called an \textit{interrupter} of $A$.  Note that if $A$ has interrupter $I$, then $A$ is normal.  Rhaly introduced the notion of posinormality in \cite{Rhaly}.  Posinormality is a unitary invariant; in fact, as Rhaly points out, if $V$ is an isometry (so that $V^*V = I$) and $A$ is posinormal with interrupter $P$, then $VAV^*$ is posinormal with interrupter $VPV^*$.   If $A^*$ is posinormal, then $A$ is \textit{coposinormal}.    

 Applying  a result  due to Douglas  \cite[Theorem 1]{douglas1966majorization}, Rhaly \cite[Theorem 2.1]{Rhaly} obtains useful equivalent conditions for posinormality: 
\begin{theorem}[\cite{Rhaly}]\label{posinormal}
For $A \in \mathcal{B(H)}$, the space of bounded linear operators on a Hilbert space $\mathcal{H}$, the following statements are equivalent:
\begin{enumerate}
    \item[(1)] $A$ is posinormal;
    \item[(2)]  $\ran \textrm{ } A \subseteq \ran \textrm{ } A^*$;
    \item[(3)]  $AA^{*} \leq \lambda^2 A^{*}A $ for some $\lambda \geq 0$; and
    \item[(4)]  there exists $T \in \mathcal{B(H)}$ such that $A = A^{*}T$.
\end{enumerate}
\end{theorem}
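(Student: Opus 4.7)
The plan is to invoke Douglas's majorization theorem directly with the choice $B = A$ and $C = A^*$. Douglas's theorem says that for $B,C \in \mathcal{B(H)}$, the three conditions $\ran B \subseteq \ran C$, $BB^* \le \lambda^2 CC^*$ for some $\lambda \ge 0$, and the factorization $B = CD$ for some $D \in \mathcal{B(H)}$, are mutually equivalent. Substituting $C = A^*$ gives $CC^* = A^*A$, and the factorization becomes $A = A^*T$. Thus the equivalence of conditions (2), (3), and (4) is immediate from Douglas.

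What remains is to link condition (1) to this chain; I would do this by closing a short loop (1) $\Rightarrow$ (3) $\Rightarrow$ (4) $\Rightarrow$ (1). For (1) $\Rightarrow$ (3): if $AA^* = A^*PA$ with $P \ge 0$, then since $P \le \|P\|I$ we have
\[
AA^* \;=\; A^*PA \;\le\; \|P\|\,A^*A,
\]
so (3) holds with $\lambda^2 = \|P\|$.

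For (4) $\Rightarrow$ (1): if $A = A^*T$, then taking adjoints gives $A^* = T^*A$, and therefore
\[
AA^* \;=\; (A^*T)(T^*A) \;=\; A^*(TT^*)A.
\]
Setting $P = TT^*$ yields a positive interrupter, establishing posinormality of $A$.

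There is no real obstacle here; the content of the theorem is essentially packaged inside Douglas's result, and the only step that requires a small observation is the choice of $C = A^*$ (rather than $C = A$) when applying Douglas, together with the trick of bounding a positive operator $P$ by $\|P\|I$ to convert the equality $AA^* = A^*PA$ into the inequality in (3).
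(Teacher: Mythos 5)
Your proof is correct and follows exactly the route the paper indicates: the equivalence of (2), (3), and (4) is Douglas's majorization theorem applied with $B=A$ and $C=A^*$, and the loop through (1) is closed by the standard observations $P\le\|P\|I$ and $P=TT^*$. This matches the paper, which states the theorem as Rhaly's application of Douglas's result without further proof.
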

Note that condition (2) of the preceding theorem yields another way to see a normal operator $N$ is both posinormal and coposinormal because $\ran N = \ran N^*$.   By (3) with $\lambda = 1$, we see that every hyponormal operator is posinormal. 

  This paper draws its motivation from three sources \cite{Bouldin,BoT,Kubrusly}. 
  \begin{itemize}
  \item   In  \cite[Section 4]{BoT}, a class of  composition operators on the Hardy space $H^2$ of the open unit disk in the complex plane is identified such that (i) each member of the class is both posinormal and coposinormal  and (ii) for each positive integer $n$ exceeding $1$, there is a composition operator $C$ in this class for which $C^n$ is not posinormal.  
  \item In \cite{Kubrusly}, an example of a posinormal operator whose square is not posinormal is exhibited and a study of powers of posinormal operators is undertaken based on the notions of ascent and descent of Hilbert-space operators. Corollary 1(b)  of that paper states, contrary to the properties of the class of composition operators discussed in the preceding bullet item,  that if a Hilbert-space operator $T$ is both posinormal and coposinormal, then $T^n$ is posinormal for all $n\ge 1$.   An additional hypothesis is needed to make Corollary 1(b) of \cite{Kubrusly} hold. In this paper, we show that such a hypothesis is that the range of $T$ be closed:
\begin{mthm}
 If $T: \mathcal{H}\to \mathcal{H}$ is a posinormal operator with closed range, then $T^n$ is posinormal for all $n\ge 1$ (no assumption of coposinormality required), and $T^n$ has closed range for all $n\ge1$. 
\end{mthm}
\smallskip

\noindent {\sl Remark:  After posting v1 of this preprint at arxiv.org, the authors discovered that the preceding ``Main Theorem'' is Corollary 14 of \cite{JV}, which arises in a line of investigation distinct from that started by Rhaly in 1994 when he introduced the notion of posinormality.  Generalizing the notion of ``EP matrix'', Campbell and Meyer (\cite{CampMeyer}, 1975) introduced EP operators, which may be characterized as follows: a Hilbert space operator $T$ is EP provided that $T$ has closed range and $\ran\, T = \ran\, T^*$.   Itoh (\cite{Itoh}, 2005) introduced hypo-EP operators,  which may be characterized as follows: a Hilbert space operator $T$ is hypo-EP provided that $T$ has closed range and $\ran\, T \subseteq \ran\, T^*$.  Thus, a hypo-EP operator is a posinormal operartor with closed range.  Corollary 14 of \cite{JV} states that positive integral powers of hypo-EP operators are hypo-EP, which is equivalent to the Main Theorem above.  Because the Main Theorem is already in the literature, this paper will not be published. }
\smallskip

  \item  In \cite{Bouldin}, Bouldin characterizes when a product of two operators with closed range will have closed range and presents an example of an operator $T$ having closed range for which $T^2$ does not have closed range.  One key to proving the Main Theorem is establishing that if $T$ is posinormal with closed range, then $T^2$ also has closed range.  We present two different proofs, one of which is based on Bouldin's work. We also present an example simpler than Bouldin's of an operator $T$ with closed range such that $T^2$ does not have closed range.
  \end{itemize}

.. \section{Results}

\begin{lemma}\label{CRL}  Suppose that $T:\mathcal{H}\rightarrow \mathcal{H}$ is posinormal and has closed range; then $T^2$ also has closed range.  
\end{lemma}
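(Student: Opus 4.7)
The plan is to leverage posinormality to locate $\ran T$ inside the subspace on which $T$ acts as a topological isomorphism. By Theorem~\ref{posinormal}(2), posinormality gives $\ran T \subseteq \ran T^{*}$, and since $T$ has closed range, $T^{*}$ does too, with $\ran T^{*} = (\ker T)^{\perp}$. Thus $\ran T$ is a closed subspace of the Hilbert space $(\ker T)^{\perp}$.

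Next I would exploit the open mapping theorem. Since $T$ has closed range, the restriction $T_0 := T|_{(\ker T)^{\perp}} : (\ker T)^{\perp} \to \ran T$ is a bounded linear bijection between Hilbert spaces, hence a topological isomorphism. In particular, $T_0$ maps closed subsets of its domain to closed subsets of $\ran T$.

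Finally, observe that $T^{2}(\mathcal{H}) = T(\ran T) = T_0(\ran T)$, where the second equality uses that $\ran T \subseteq (\ker T)^{\perp}$ from Step~1. Because $\ran T$ is a closed subspace of $(\ker T)^{\perp}$ and $T_0$ is a topological isomorphism, $T_0(\ran T)$ is closed in $\ran T$, and therefore closed in $\mathcal{H}$.

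I do not anticipate a serious obstacle: the whole argument pivots on the single observation that posinormality places $\ran T$ inside the subspace $(\ker T)^{\perp}$ on which $T$ is bounded below, so one extra application of $T$ cannot destroy closedness. The only subtlety to double-check is that $\ran T^{*} = (\ker T)^{\perp}$ requires the closed-range hypothesis (not merely $\overline{\ran T^{*}} = (\ker T)^{\perp}$), but that is precisely what is given.
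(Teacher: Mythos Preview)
Your proof is correct and follows essentially the same approach as the paper's. Both arguments rest on the same two observations: (i) the restriction of $T$ to $(\ker T)^\perp = \ran T^*$ is bounded below (you phrase this as ``$T_0$ is a topological isomorphism'' via the open mapping theorem; the paper writes $\|Th\|\ge c\|h\|$ directly), and (ii) posinormality places $\ran T$ inside $(\ker T)^\perp$, so a second application of $T$ stays within the region where $T$ is bounded below. The paper finishes with a Cauchy-sequence argument showing $T^2$ is bounded below on $\ran T^*$, while you finish by noting a homeomorphism carries closed sets to closed sets; these are interchangeable packagings of the same idea.
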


\begin{proof}  Assume that $T$ has closed range, so that $T^*$ also has closed range. We have the following orthogonal decomposition of $\mathcal{H}$:  $\mathcal{H}= \ker T  \oplus \ran T^*$. Because the restriction of $T$ to $\ran T^*$ is one-to-one with closed range, $T$ is bounded below on $\ran T^*$:    for all $h\in \ran T^*$, we have
$$
\|Th\| \ge c \|h\|,\  \text{for some}\  c > 0.
$$
  Because $T$ is posinormal,  $\ran T\subseteq \ran T^*$ (by Part (2) of Theorem~\ref{posinormal}), and thus we have 
\begin{equation}\label{bddbelow}
\|T^2h\| \ge c^2 \|h\|  \quad \text{for all} \ h\in \ran T^*.
\end{equation}
It follows that $T^2$ has closed range: if $T^2 h_n \to w$ as $n\to\infty$ for some $w\in \mathcal{H}$ and sequence $(h_n)$ in $\ran T^*$; then, by (\ref{bddbelow}), because $(T^2 h_n)$ is Cauchy,  the sequence $(h_n)$ is also Cauchy, so that $(h_n)$ has limit $h\in \ran T^*$ and  $w = \lim_{n\to\infty} T^2 h_n  = T^2h$.   \end{proof}

  The preceding lemma may also be established using
      \begin{quotation}  Bouldin's Criterion \cite{Bouldin}: {\it  If $A$ and $B$ are operators on $\mathcal{H}$ having closed range then $AB$ also has closed range if and only if the angle between  $\ran B$ and $\ker A\cap(\ker A \cap \ran B)^\perp$ is positive.}
      \end{quotation}
Recall that the angle $\theta$ between two subspaces $\mathcal{M}$ and $\mathcal{N}$  of $\mathcal{H}$ is given by
$$
\theta = \cos^{-1}\left(\rule{0in}{.15in}\sup\{|\langle f, g\rangle|: f\in \mathcal{M}, g\in \mathcal{N}, \|f\| =1= \|g\|\}\right),
$$
where $\langle \cdot, \cdot\rangle$ denotes the inner product of the Hilbert space $\mathcal{H}$.  Suppose that $A = B = T$, where $T$ is posinormal with closed range.  Then $\ker T\cap \ran T =\{0\}$ because $\ran T\subseteq \ran T^*$.  Hence, $\ker T\cap (\ker T\cap \ran T)^\perp = \ker T$ and  the angle $\theta$ between $\ran T$ and $\ker T\cap(\ker T \cap \ran T)^\perp$ is
$$
\cos^{-1}\left(\rule{0in}{.15in}\sup\{|\langle f, g\rangle|: f\in \ran T, g\in \ker T, \|f\| =1= \|g\|\}\right) = \cos^{-1}(0) = \pi/2 > 0,
$$
where we have again used $\ran T \subseteq \ran T^*$ to obtain $\langle f, g\rangle = 0$ for all inner products in the set above whose supremum is found to be 0.  By Bouldin's Criterion, $T^2$ has closed range.  
     
  In general, if $T: \mathcal{H} \to \mathcal{H}$ has closed range, then $T^2$ need not have closed range.  Bouldin \cite[p.\ 363]{Bouldin} provides an example involving a direct-sum decomposition of $\mathcal{H}$ into three mutually orthogonal subspaces.  Here's an example involving a  direct-sum decomposition of $\mathcal{H}$ into two  orthogonal subspaces.
  
 \begin{exmp} A Hilbert-space operator having closed range whose square does not have closed range.  
 \end{exmp}
 \begin{proof}[]
  Let $A: \mathcal{H} \to  \mathcal{H}$  be any bounded linear operator on the Hilbert space $\mathcal{H}$ such that $A$ does not have closed range.   Define $T: \mathcal{H}\oplus \mathcal{H} \to \mathcal{H}\oplus \mathcal{H}$ by
 $$
 T(h_1, h_2) = (Ah_1 + h_2, 0), 
 $$
 so that $T$ has matrix representation 
 $$
 T  = \begin{pmatrix} A & I \\0&0\end{pmatrix}.
 $$
 Then $\ran T  = \mathcal{H}\oplus \{0\}$ is closed and $\ran T^2  = \ran A \oplus \{0\}$  is not closed. \end{proof}

 The following lemma is Proposition 3 of \cite{JKKP}.  We include a short proof for completeness.

\begin{lemma}[\cite{JKKP}]\label{keylemma}  Suppose that $T:\mathcal{H}\to \mathcal{H}$ is posinormal; then
$$
\ker T =  \ker T^2.
$$
\end{lemma}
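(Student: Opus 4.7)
The plan is extremely short. One inclusion, $\ker T \subseteq \ker T^2$, holds for every bounded operator, so the task is to show the reverse inclusion $\ker T^2 \subseteq \ker T$ under the posinormality hypothesis.

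First I would take $h \in \ker T^2$, so that $Th \in \ker T$. The key observation is that $Th$ also lies in $\ran T$, and by part (2) of Theorem~\ref{posinormal}, posinormality gives $\ran T \subseteq \ran T^*$. Therefore $Th \in \ran T^*$.

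Next I would use the standard orthogonality $\ran T^* \subseteq \overline{\ran T^*} = (\ker T)^\perp$. Combining, $Th$ lies in both $\ker T$ and $(\ker T)^\perp$, which forces $Th = 0$, i.e.\ $h \in \ker T$. There is no real obstacle here; the argument is essentially a two-line consequence of the characterization of posinormality given in Theorem~\ref{posinormal}(2). The only minor point worth flagging is that one uses $\ran T^* \subseteq (\ker T)^\perp$ rather than equality, so no closed-range assumption on $T$ is needed for this lemma.
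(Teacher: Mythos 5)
Your proof is correct and is essentially the paper's argument: both use posinormality via Theorem~\ref{posinormal}(2) to place $Th \in \ran T^*$ and then conclude $Th = 0$ from the fact that $Th$ also lies in $\ker T$. The paper verifies the orthogonality $\ran T^* \perp \ker T$ by the direct computation $\|T^*w\|^2 = \langle TT^*w, w\rangle = 0$, whereas you cite the standard identity $(\ker T)^\perp = \overline{\ran T^*}$; these are the same step in different clothing.
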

\begin{proof}  That $\ker T \subseteq \ker T^2$ is clear.  Suppose that $h\in \ker T^2$.  Because $T$ is posinormal, $\ran T \subseteq \ran T^*$; thus, there is some $w\in \mathcal{H}$ such that $Th = T^*w$.  We have
$$
0 = T(Th) = T(T^*w),
$$
so that $0 = \langle TT^* w, w\rangle = \langle T^*w, T^* w\rangle = \|T^*w\|^2$.  Hence $T^*w = 0$ and we have $Th = T^*w = 0$. Thus, $h\in \ker T$, and it follows that $\ker T^2\subseteq \ker T$, as desired.
\end{proof}

   : 
       
   \begin{mthm} If $T: \mathcal{H}\to \mathcal{H}$ is a posinormal operator with closed range, then $T^n$ is posinormal for all $n\ge 1$ and $T^n$ has closed range for all $n\ge1$. 
\end{mthm}
\begin{proof}  Suppose that $T$ is posinormal with closed range.  By Lemma~\ref{CRL},  $T^2$ also has closed range.  Because  $\ran T$ and $\ran T^2$ are closed,  the same is true of $\ran T^*$ and  $\ran {T^*}^2$.   Because $T$ is posinormal, Lemma~\ref{keylemma} yields
$\ker T = \ker T^2$.    Thus,  $(\ker T)^{\perp} = (\ker T^2)^\perp$, so that  the closure of $\ran T^*$  equals the closure of $\ran {T^*}^2$, but these ranges are closed, so that $\ran T^* = \ran {T^*}^2$. Thus, $\ran T^* = \ran {T^*}^n$ for all $n\ge1$ (see, e.g., \cite[p.\ 290]{TL}).  Because $T$ is posinormal, $\ran T \subseteq \ran T^*$. Thus for every $n \ge 1$, we have
$$
\ran T^n \subseteq \ran T \subseteq \ran T^* = \ran {T^*}^n.
$$
Hence, by part (2) of Theorem~\ref{posinormal}, $T^n$ is posinormal for all $n\ge 1$.   

    Note that $T^n$ has closed range for all $n\ge1$ because $\ran T^*$ is closed and $\ran {T^*}^n = \ran T^*$ for all $n\ge 1$.
\end{proof}

Powers of a normal operator remain normal, of course, whether or not the operator has closed range.  However, powers of a hyponormal operator need not be hyponormal, and, as discussed in the Introduction, powers of posinormal operators need not be posinormal.   We have shown in our Main Theorem that adding to posinormality the hypothesis of closed range does yield the posinormality of powers.  Thus, we can say that a closed range hyponormal operator has posinormal powers, but  the stronger assertion that powers are hyponormal is not valid---we conclude this paper with an example illustrating that the square of a closed-range hyponormal operator need not be hyponormal.    First, we state the following immediate corollary of our Main Theorem. 

\begin{cor} If $T: \mathcal{H}\to \mathcal{H}$ has closed range and is normal or hyponormal , then $T^n$ has closed range for all $n\ge 1$.
\end{cor}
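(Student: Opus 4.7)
The plan is to deduce this directly from the Main Theorem together with the two implications between operator classes already observed in the introduction. Specifically, the statement ``normal or hyponormal with closed range'' is to be reduced to ``posinormal with closed range,'' after which the Main Theorem finishes the proof.

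First, I would record that every normal operator is posinormal: if $N$ is normal, then $\ran N = \ran N^*$, so in particular $\ran N \subseteq \ran N^*$, and condition~(2) of Theorem~\ref{posinormal} applies. (This observation is already noted in the paragraph immediately following Theorem~\ref{posinormal}.) Next, I would record that every hyponormal operator is posinormal: by definition, $T$ hyponormal means $TT^* \le T^*T$, which is condition~(3) of Theorem~\ref{posinormal} with $\lambda = 1$. (This too is noted in the paragraph following Theorem~\ref{posinormal}.)

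With these two observations in hand, the proof reduces to a single line: if $T$ is normal or hyponormal with closed range, then $T$ is posinormal with closed range, and the Main Theorem immediately yields that $T^n$ has closed range for every $n \ge 1$.

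Since every step is an invocation of a result already in the excerpt, there is no genuine obstacle here; the corollary is a bookkeeping consequence of the Main Theorem and the class inclusions
\[
\{\text{normal}\} \cup \{\text{hyponormal}\} \subseteq \{\text{posinormal}\}.
\]
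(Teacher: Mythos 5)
Your proposal is correct and is exactly the argument the paper intends: the corollary is stated as an immediate consequence of the Main Theorem via the inclusions normal $\subseteq$ posinormal and hyponormal $\subseteq$ posinormal, both of which are noted right after Theorem~\ref{posinormal}. No gaps.
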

That powers of closed-range normal operators continue to have closed range is a consequence of the Spectral Mapping Theorem  and the following characterization of normal operators having closed range (see, e.g., Proposition 4.5 of Chapter 11 of \cite{JBC}): 
\begin{quotation} {\it A normal operator has closed range if and only if $0$ is not a limit point of its spectrum.}
\end{quotation}
The preceding spectral characterization of close-range normal operators certainly does not extend to hyponormal  operators; for example, the unilateral shift $U$ on $\ell^2$ is hyponormal with closed range but its spectrum is the closed unit disk.  That powers of closed-range hyponormal operators continue to have closed range appears to be new. 

\begin{exmp} A hyponormal operator having closed range whose square is not hyponormal.  
 \end{exmp}
 \begin{proof}[]
  Let $U$, as above, be the unilateral shift on $\ell^2$. Ito and Wong \cite[p.\ 158]{IW}  point out  $U^* + 2U$ is an example of a hyponormal Toeplitz operator ($U^* + 2U$ is equivalent to the Toeplitz operator with symbol  $\phi(z) = \bar{z} + 2 z$) that is neither normal nor analytic.   The hyponormal operator $U^* + 2U$ has closed range because it is bounded below: for each $\ell^2$ sequence $s$, we have $\|Us\| = \|s\|$ and $\|U^*s\| \le \|s\|$ so that  $\|(2U + U^*)s\| \ge \|2Us\| - \|U^*s\| \ge \|s\|$.  However, the square of $U^* + 2U$ is not hyponormal---see the solution of Problem 209 of \cite{PH82}. (That all powers of $U^* + 2U$ remain posinormal follows from our Main Theorem or the observation that all powers of the adjoint of $U^* + 2U$ are surjective because $U^* + 2U$ is bounded below.)    \end{proof}


\end{document}